\newtheorem{theorem}{Theorem}[section]
\newtheorem{lemma}[theorem]{Lemma}
\newtheorem*{conjecture}{Conjecture}
\theoremstyle{definition}
\newtheorem{question}[theorem]{Question}
\theoremstyle{remark}
\newtheorem{remark}{Remark}
\definecolor{MyGreen}{rgb}{0, 0.5, 0}
\definecolor{MyOrange}{cmyk}{0, 0.6,1.0,0}
\newcommand{\Red}[1]{{\color{red}#1}}
\newcommand{\Blue}[1]{{\color{blue}#1}}
\newcommand{\Green}[1]{{\color{MyGreen}#1}}
\newcommand{\Orange}[1]{{\color{MyOrange}#1}}
\newcommand{\Cyan}[1]{{\color{cyan}#1}}
\newcommand{\Magenta}[1]{{\color{magenta}#1}}
\title{Cosmetic banding on knots and links}
\author{Kazuhiro Ichihara}
\address{Department of Mathematics, College of Humanities and Sciences, Nihon University, 3-25-40 Sakurajosui, Setagaya-ku, Tokyo 156-8550, Japan.}
\email{ichihara@math.chs.nihon-u.ac.jp}
\thanks{Ichihara is partially supported by JSPS KAKENHI Grant Number 26400100.}
\author[{In Dae Jong with Hidetoshi Masai}]{In Dae Jong \\with an appendix by Hidetoshi Masai}
\address{Department of Mathematics, Kindai University, 3-4-1 Kowakae, Higashiosaka City, Osaka 577-0818, Japan} 
\email{jong@math.kindai.ac.jp}
\address{Graduate School of Mathematical Sciences, The University of Tokyo, 3-8-1 Komaba Meguro-ku Tokyo 153-8914, Japan}
\email{masai@ms.u-tokyo.ac.jp}
\thanks{Masai is partially supported by JSPS Research Fellowship for Young Scientists.}
\dedicatory{Dedicated to Professor Taizo Kanenobu on the occasion of his 60th birthday}
\keywords{knot, link, banding, band surgery, cosmetic surgery}
\subjclass[2010]{Primary 57M25; Secondary 57M50, 57N10}
\begin{document}

\begin{abstract}
We present various examples of cosmetic bandings on knots and links, 
that is, bandings on knots and links leaving their types unchanged. 
As a byproduct, we give a hyperbolic knot which admits exotic chirally cosmetic surgeries yielding hyperbolic manifolds. 
This gives a counterexample to a conjecture raised by Bleiler, Hodgson and Weeks. 
\end{abstract}

\maketitle
\tableofcontents

\section{Introduction}\label{sec:intro}

In this paper, we call the following operation on a link a \textit{banding}\footnote{The operation is sometimes called a band surgery, a bund sum (operation), or a hyperbolic transformation in a variety of contexts. 
In this paper, referring to \cite{Bleiler}, we use the term banding to clearly distinguish it from a Dehn surgery on a knot.} on the link. 
For a given link $L$ in the 3-sphere $S^3$ and an embedding $b \colon I \times I \to S^3$ such that $b ( I \times I ) \cap L = b ( I \times \partial I )$, where $I$ denotes a closed interval, we obtain a (new) link as $ ( L - b ( I \times \partial I ) ) \cup b ( \partial I \times I )$. 
We call the link so obtained \emph {the link obtained from $L$ by a banding 
along the band $b$}. 

This operation would be first studied in \cite{Lickorish}, and then, played very important roles in various scenes in Knot theory, and also in much wider ranges, for example, relation to DNA topology (see \cite{IshiharaShimokawaVazquez} for example). 
Also see \cite{AbeKanenobu, Kanenobu10, Kanenobu12, KanenobuMoriuchi} for some of recent studies. 

\begin{remark}
On performing a banding, it is often assumed the compatibility of orientations of the original link and the obtained link, but in this paper, we do not assume that. 
Also note that this operation for a knot yielding a knot appears as the $n=2$ case of the $H(n)$-move on a knot, which was introduced in \cite{HosteNakanishiTaniyama}. 
See \cite{KanenobuMiyazawa} for a study of $H(2)$-move on knots for example. 
\end{remark}

In general, for an operation on knots and links, a fundamental question 
must be when the operation preserves the type of a knot and link. 
For example, for the crossing change operation, this question is related to the famous Nugatory Crossing Conjecture \cite[Problem 1.58]{Kirby}. 

Let us consider this question for a banding in this paper. 
Precisely we say that a banding on a link $L$ in $S^3$ is 
\textit{cosmetic} if the link $L'$ obtained by a (non-trivial) banding on $L$ 
is equivalent to $L$, in other words, 
there exists a self-homeomorphism of $S^3$ which takes $L$ to $L'$. 
If the self-homeomorphism is orientation-preserving (resp.~orientation-reversing), 
then we call the banding is a \textit{purely} cosmetic (resp.~\textit{chirally} cosmetic). 

There always exists a ``trivial'' banding on a link which is cosmetic, 
that is when the band is half-twisted and parallel to the link. 
However, also some other ``non-trivial'' examples are known. 
In this paper, we analyze and describe mechanisms of known examples, and present more examples of cosmetic bandings. 
Detailed contents are as follows. 

In Section~\ref{sec:BandAndSurgery}, we give a brief review on 
a relationship between a banding and a Dehn surgery. 
We also introduce some terminologies used in this paper. 

In Section~\ref{sec:4-move}, we give examples of cosmetic bandings 
realized by a $4$-move operation. 
In particular, based on the examples given in \cite{Kanenobu11}, 
we present a family of infinitely many two-bridge links which admit cosmetic bandings. 
Actually we see that the family coincides with the set of two-bridge links with unlinking number one. 

In Section~\ref{sec:TorusKnot}, we analyze the example of a cosmetic banding 
given in \cite{Zekovic}, 
that is, a cosmetic banding on the torus knot of type $(2,5)$. 
We will describe why such a cosmetic banding can occur, 
and point out the example is quite sporadic. 

In Section~\ref{sec:9_27}, we see that the knot $9_{27}$ in the knot table admits a cosmetic banding, and reveal its mechanism. 
As a matter of fact, the example comes from the famous example related to the Cosmetic Surgery Conjecture 
given in \cite{BleilerHodgsonWeeks}. 
As a byproduct, we give a hyperbolic knot which admits exotic chirally cosmetic Dehn surgeries yielding hyperbolic 3-manifolds. 
This gives a counterexample to a conjecture raised by Bleiler, Hodgson and Weeks in \cite[Conjecture 2]{BleilerHodgsonWeeks}.

\begin{remark}
The known examples and our examples are all chirally ones. 
Therefore it remains open whether there exists a non-trivial purely cosmetic banding on a link. 
\end{remark}

\section{Banding and Dehn surgery}\label{sec:BandAndSurgery}

In this section, as a basic tool used throughout this paper, 
we describe a relationship between a banding on a link and a Dehn surgery on a knot. 

\subsection{Dehn surgery}

Let $K$ be a knot in a 3-manifold $M$ with the exterior $E(K)$ (i.e., the complement of an open tubular neighborhood of $K$). 
Let $\gamma$ be a slope (i.e., an isotopy class of non-trivial simple closed curves) on the boundary torus $\partial E(K)$. 
Then, the \textit{Dehn surgery on $K$ along $\gamma$} 
is defined as the following operation. 
Glue a solid torus $V$ to $E(K)$ such that a simple closed curve representing $\gamma$ bounds a meridian disk in $V$. 
We denote the obtained manifold by $K(\gamma)$.

It is said that the Dehn surgery along the meridional slope is the \textit{trivial} Dehn surgery. 
Also it is said that a Dehn surgery along a slope which is represented by a simple closed curve with single intersection point with the meridian is an \textit{integral} Dehn surgery. 

In the case where $M = S^3$, we have the well-known bijective correspondence between $\mathbb{Q} \cup \{ 1/0 \}$ and the slopes on $\partial E(K)$, which is given by using the standard meridian-longitude system for $K$. 
When the slope $\gamma$ corresponds to $r \in \mathbb{Q} \cup \{1/0\}$, then the Dehn surgery on $K$ along $\gamma$ is said to be 
the \textit{$r$-Dehn surgery} on $K$, or the \textit{$r$-surgery} on $K$ for brevity. 
We also denote the obtained manifold by $K(r)$.
In this case, we note that an integral Dehn surgery corresponds to an $n$-Dehn surgery with an integer $n$. 

\subsection{Montesinos trick}

We here recall the Montesinos trick originally introduced in \cite{Montesinos}. 
Let $\bar{M}$ be the double branched cover of $S^3$ branched along a link $L \subset S^3$. 
Let $K$ be a knot in $\bar{M}$, which is \textit{strongly invertible} with respect to the preimage $\bar{L}$ of $L$, that is, there is an orientation preserving involution of $\bar{M}$ with the quotient $M$ and the fixed point set $\bar{L}$ which induces an involution of $K$ with two fixed points. 
Then the manifold $K(\gamma)$ obtained by an integral Dehn surgery on $K$ is homeomorphic to the double branched cover along the link obtained from $L$ by a banding along the band appearing as the quotient of $K$ with the corresponding framing $r$. 
See \cite{BakerBuck, BakerBuckLecuona, Bleiler} for example.

\subsection{Cosmetic surgery conjecture} 

In view of the Montesinos trick, studying cosmetic bandings on links is directly related to the following well-known conjecture.

\medskip
\noindent
\textbf{Cosmetic Surgery Conjecture} (\cite[Conjecture 2]{BleilerHodgsonWeeks}, also see \cite[Problem 1.81(A)]{Kirby}): 
Two surgeries on inequivalent slopes are never purely cosmetic. 
\medskip

Here two slopes are called \emph{equivalent} if there exists a self-homeomorphism of the exterior of a knot $K$ taking one slope to the other, and two surgeries on $K$ along 
two slopes are called \emph{purely cosmetic} (resp.\ \emph{chirally cosmetic}\footnote{In \cite{BleilerHodgsonWeeks}, it is called \emph{reflectively cosmetic}.}) if there exists an orientation preserving (resp.\ reversing) homeomorphism between the pair of the manifolds obtained by the pair of the surgeries. 
We say that a pair of cosmetic surgeries on a knot is \emph{mundane} 
(resp.\ \emph{exotic}) if the surgeries are along equivalent (resp.\ inequivalent) slopes. 
These terminologies were introduced in \cite{BleilerHodgsonWeeks}. 

The conjecture suggests that exotic purely cosmetic bandings 
on links might not exist, or are quite hard to find if exist. 
On the other hand, it is known that the cosmetic surgery conjecture 
for ``chirally cosmetic'' case is not true. 
That is, there exist many knots admitting non-trivial chirally cosmetic surgeries. 
From such examples, we can have cosmetic bandings on links as we will see in the rest of this paper.

\subsection{Mundane and exotic banding}

According to the definitions of a mundane cosmetic surgery and 
an exotic cosmetic surgery, 
we define a mundane cosmetic banding and an exotic cosmetic banding as follows. 

It is well-known that a rational tangle is determined by the meridional disk in the tangle. 
The boundary of the meridional disk is parameterized by an element of $\mathbb{Q} \cup \{1/0\}$, 
called a \emph{slope} of the rational tangle. 
A rational tangle is said to be \emph{integral} if the slope is an integer or $1/0$. 
For brevity, we call an integral tangle with slope $n$ an \emph{$n$-tangle}. 

A banding can be regarded as an operation replacing a $1/0$-tangle into an $n$-tangle. 
Then we call this banding a \emph{banding with a slope} $n$, and the $3$-ball corresponds to the $n$-tangle the \emph{banding ball}. 

A cosmetic banding with slope $n$ is said to be \emph{mundane} when there exists a self-homeomorphism on the exterior of the banding ball taking the slope $1/0$ to $n$. 
A cosmetic banding which is not mundane is said to be \emph{exotic}.

\section{4-moves}\label{sec:4-move}

In this section, we introduce examples of chirally cosmetic bandings 
which come from a $4$-move operation. 
By these examples, one might say that examples of chirally cosmetic bandings are not difficult to find. 
However, all of examples of chirally cosmetic bandings introduced in this section are mundane. 

A \emph{$4$-move} on a link is a local change that involves 
replacing parallel lines by $4$ half-twists 
as shown in the left-hand side of Figure~\ref{fig:4-move0}. 
We may regard the local move 
given in the right-hand side of Figure~\ref{fig:4-move0} as a $4$-move also. 

\begin{figure}[!htb]
\centering
\begin{overpic}[width=.6\textwidth]{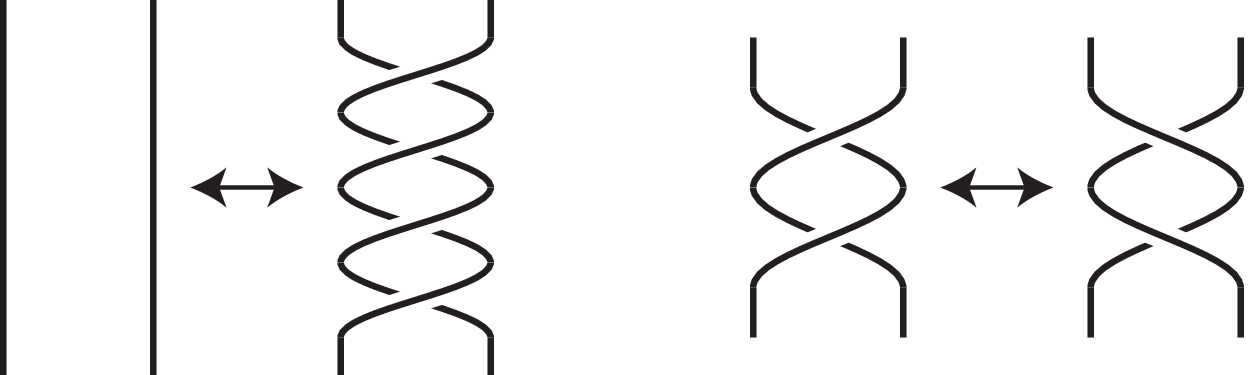}
\end{overpic}
\caption{$4$-moves}
\label{fig:4-move0}
\end{figure}

First, we show the following lemma. 

\begin{lemma}\label{lem:4-move}
A $4$-move on a link is realizable by a single banding. 
\end{lemma}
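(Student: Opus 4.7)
The plan is to identify a $4$-move with the tangle replacement underlying a banding. As recalled in Section~\ref{sec:BandAndSurgery}, a banding with slope $n$ is the local move that replaces the $1/0$-tangle inside the banding ball by the integer $n$-tangle; thus it suffices to recognize a $4$-move as such a replacement with $n = \pm 4$.

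First I would note that the left-hand side of Figure~\ref{fig:4-move0} is exactly of this form: inside the local ball supporting the move, the two parallel strands form the $1/0$-tangle before the move, and after the move the same outer endpoints are connected via four half-twists, which by convention is the $\pm 4$-tangle, the sign being dictated by the direction of the twists. Concretely, one produces the band by taking a thin rectangle $b\colon I\times I\to S^3$ whose attachment arcs $b(I\times\partial I)$ lie on the two parallel strands, and whose core runs from one strand to the other with two full twists; the banding deletes $b(I\times\partial I)$ and inserts $b(\partial I\times I)$, which yields precisely the four half-twists. Taking the banding ball to be this local ball, the $4$-move coincides with a banding of slope $\pm 4$.

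The variant on the right-hand side of Figure~\ref{fig:4-move0} is handled by the same construction after repositioning the band so that the replacement tangle appears with the other orientation; the underlying argument is unchanged. The only bookkeeping is matching the sign of the twist and confirming that the chosen band carries four (rather than some other number of) half-twists, but this is a direct diagrammatic check and presents no real obstacle. The lemma therefore follows from exhibiting the band explicitly.
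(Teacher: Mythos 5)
Your key identification is not correct, and the proposed band does not realize a $4$-move. The problem is the slope bookkeeping: if the two parallel strands inside the local ball form the $1/0$-tangle, then inserting four half-twists between them produces the rational tangle of slope $1/(\pm 4)$, not the integer $\pm 4$-tangle. The integer $\pm 4$-tangle joins the four boundary points of the ball with the \emph{opposite} connectivity (four horizontal half-twists between two arcs that cross the ball the other way). Viewed as a rational tangle replacement supported in that obvious ball, the $4$-move is $1/0 \to 1/(\pm 4)$, whose distance from $1/0$ is $4$, whereas a banding is by definition a distance-one replacement $1/0 \to n$. So no band supported in that ball in the manner you describe can realize the move.

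You can see the failure concretely from connectivity. A band with one end on each of the two strands, whatever twisting it carries, necessarily reconnects the strands to each other: after the banding, an endpoint of the first strand is paired with an endpoint of the second. A $4$-move preserves the original pairing (each strand still runs from its own entry point to its own exit point, merely twisted about the other), and in general your surgery would even change the number of components of the link. Your band with two full twists in its core does produce four crossings, but between two arcs with the wrong connectivity, i.e.\ the integer $\pm 4$-tangle. The paper's proof (Figure~\ref{fig:4-move}) is a genuinely different picture: the band has both of its ends attached to the \emph{same} strand and passes around the other strand, so that the distance-one replacement inside the banding ball has, globally, the effect of inserting four half-twists while preserving the connectivity of the tangle. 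Producing that positioning of the band is the entire content of the lemma, and it is exactly the step your argument skips.
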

\begin{proof}
Apply the banding as shown in Figure~\ref{fig:4-move}. 
\begin{figure}[!htb]
\centering
\begin{overpic}[width=.4\textwidth]{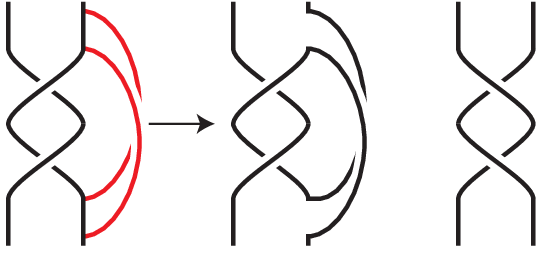}
\put(73,21){$=$}
\end{overpic}
\caption{The banding yields a $4$-move.}
\label{fig:4-move}
\end{figure}
\end{proof}

\subsection{Symmetric union}

By Lemma~\ref{lem:4-move}, 
a \emph{symmetric union} as shown in Figure~\ref{fig:SymUnion} 
admits a cosmetic banding. 
This fact was pointed out by Kanenobu~\cite[Section 11]{Kanenobu11}. 
Note that such symmetric unions contain some familiar knots, 
for example, a pretzel knot of type $(\pm 2, p, -p)$, 
more generally a Montesinos knot of type $(\pm \frac{1}{2}, R, -R)$. 
Furthermore the Kinoshita-Terasaka knot and the Conway knot are also contained, 
see \cite[Figure 15]{Kanenobu11}.

\begin{figure}[!htb]
\centering
\begin{overpic}[width=.4\textwidth]{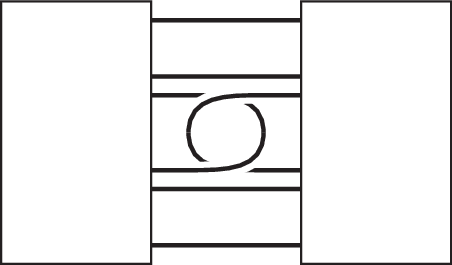}
\put(13,27){$T$}
\put(82,27){$T^*$}
\put(48.3,14.5){\rotatebox{-90}{$\cdots$}}
\put(48.3,52){\rotatebox{-90}{$\cdots$}}
\end{overpic}
\caption{The tangle $T^*$ is obtained from $T$ by mirroring along the vertical plane.}
\label{fig:SymUnion}
\end{figure}

\subsection{Satellite knot/link}

By using Lemma~\ref{lem:4-move}, other examples are obtained as follows. 
For an amphicheiral knot in $S^3$, the Whitehead double with the blackboard framing 
is a knot admitting a cosmetic banding, 
and the $(2,2)$-cable is a $2$-component link admitting a cosmetic banding.

\subsection{Two-bridge link}

Further examples are given by unlinking number one two-bridge links as follows. 
Two-bridge links with unlinking number one were determined by Kohn~\cite{Kohn}. 
That is, the unlinking number of a two-bridge link $L$ is one if and only if 
there exist relatively prime integers $m$ and $n$ such that 
\[ L = S(2n^2, 2nm \pm 1) \] 
in the Schubert form. 
In the Conway form, this condition is equivalent to 
\[ L = C(a_0, a_1, \dots, a_k, \pm 2, -a_k, \dots, -a_1, -a_0) \, . \]
It is clear that such a two-bridge link admits 
a chirally cosmetic banding realized by a 4-move which changes 
\[ C(a_0, a_1, \dots, a_k, \pm 2, -a_k, \dots, -a_1, -a_0)\]  
into 
\[ C(a_0, a_1, \dots, a_k, \mp 2, -a_k, \dots, -a_1, -a_0) \, . \] 

These examples also can be explained by considering 
cosmetic Dehn surgeries on knots in lens spaces. 
Note that the double-branched cover of $S^3$ 
along a two-bridge link is a lens space. 
In~\cite{Matignon}, Matignon gave a complete list of 
cosmetic surgeries on non-hyperbolic knots in lens spaces. 
In particular, a Dehn surgery on a lens space $L(2m^2, 2mn-1)$ 
along the knot, $K_{m,n}$, the $(m,n)$-cable of the axis in $L(2m^2, 2mn-1)$ 
is contained in his list. 
Here $m$ and $n$ are positive and coprime integers with $2n \le m$. 
By taking the quotient with respect to a strong inversion, 
we can notice that the cosmetic surgery on $K_{m,n}$ 
yields a cosmetic banding on the two-bridge link $S(2m^2, 2mn-1)$. 
Furthermore this corresponds to the banding such that 
$C(a_0, a_1, \dots, a_k, -2, -a_k, \dots, -a_1, -a_0)$ changes to 
$C(a_0, a_1, \dots, a_k, 2, -a_k, \dots, -a_1, -a_0)$. 
Thus a cosmetic banding on a two-bridge link with unlinking number one 
corresponds to the cosmetic surgery on a lens space along a cable knot of a torus knot. 

\begin{remark}
In~\cite{Rong}, Rong determined cosmetic Dehn fillings on a Seifert fibered manifold 
with a torus boundary. 
Among the cosmetic surgeries and fillings contained in Matignon's list and Rong's list, 
the cosmetic surgery on a lens space $L(2m^2, 2mn-1)$ along $K_{m,n}$ due to Matignon 
is the only example yielding a cosmetic banding in the down-stair 
since the other cosmetic surgeries and fillings have no integral slopes. 
\end{remark}

\begin{remark}
It is easy to see that the chirally cosmetic banding realized by a $4$-move 
is mundane as shown in Figure~\ref{fig:mundane}. 
Thus, all of the cosmetic bandings introduced in this section are mundane. 
\end{remark}
\begin{figure}[!htb]
\centering
\begin{overpic}[width=.7\textwidth]{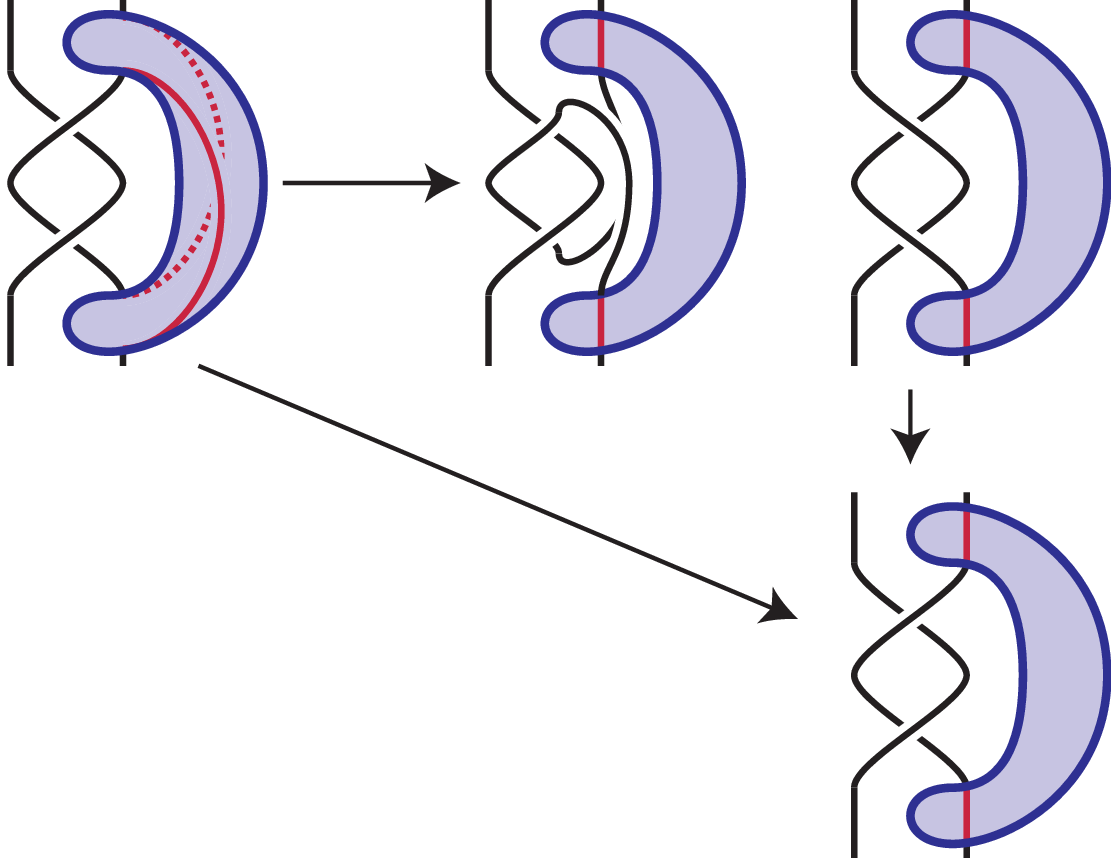}
\put(25.5,63.5){twisting}
\put(70,60){$=$}
\put(84,38){mirroring}
\put(10,28){orientation reversing}
\put(10,24){homeomorphism}
\put(10,20){taking the slopes $-1/1$ to $1/0$}
\end{overpic}
\caption{The banding ball is indicated by the blue ball.}
\label{fig:mundane}
\end{figure}

\section{Torus knot}\label{sec:TorusKnot}

In this section, we introduce an example of a non-trivial chirally cosmetic banding 
which does not realized by a $4$-move. 
Recently, Zekovi\'c~\cite{Zekovic} found that the $H(2)$-Gordian distance between 
the $(2,5)$-torus knot $T(2,5)$ and its mirror image $T(2,-5)$ is equal to one 
as shown in Figure~\ref{fig:5_1}. 
That is, $T(2,5)$ admits a cosmetic banding. 

First we show that this banding cannot be realized by a $4$-move. 
We denote by $\sigma(K)$ the signature of a knot $K$. 
If a knot $K_1$ is obtained from $K_0$ by a single crossing change, 
then we have $| \sigma(K_0) - \sigma(K_1)| \le 2$~\cite{Murasugi}. 
Since a $4$-move is realized by crossing changes two times, 
if a knot $K'$ is obtained from $K$ by a single $4$-move, 
then we have $| \sigma(K) - \sigma(K')| \le 4$. 
If a banding changes a knot $K$ into a knot $K'$, 
which is realized by a $4$-move as in the previous section, 
then $| \sigma(K) - \sigma(K')| \le 4$. 
On the other hand, we have $\sigma(T(2,5)) = -4$ and $\sigma(T(2,-5)) = 4$. 
Therefore this chirally cosmetic banding on $T(2,5)$ cannot be realized by a $4$-move. 

Next we consider the mechanism of this banding. 
Since the $(2,5)$-torus knot is the two-bridge knot of the form $S(5,1)$, 
we have a chirally cosmetic surgery on a knot in the lens space $L(5,1)$ 
by the Montesinos trick. 
Here we observe this phenomenon in detail. 

\begin{figure}[!htb]
\centering
\begin{overpic}[width=.75\textwidth]{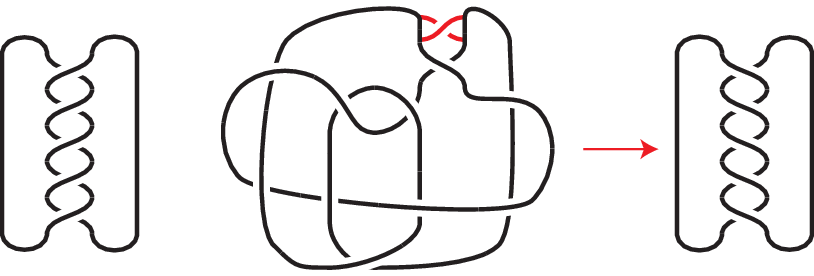}
\put(20,17){$=$}
\end{overpic}
\caption{$T(2,5)$ admits a cosmetic banding found by Zekovi\'c.}
\label{fig:5_1}
\end{figure}

By an isotopy, the cosmetic banding on $T(2,5)$ is shown as in Figure~\ref{fig:sibling1}. 
Decomposing to the tangles as in Figure~\ref{fig:sibling2}, 
we can draw the knot corresponding to the banding in $L(5,1)$ as in Figure~\ref{fig:sibling2}. 
The knot is actually the famous hyperbolic knot whose complement 
is called the ``figure-eight sibling''. 
It is known that the complement of the figure-eight sibling 
is amphicheiral~\cite{MartelliPetronio, Weeks}. 
Hence we can notice that the chirally cosmetic banding on $T(2,5)$ 
comes from the amphicheirality of the figure-eight sibling. 

\begin{figure}[!htb]
\centering
\begin{overpic}[width=.55\textwidth]{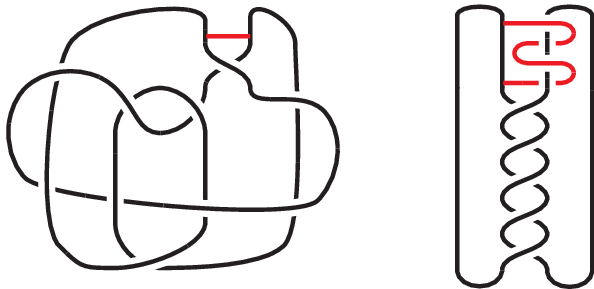}
\put(63,23){$=$}
\put(35,44){\Red{$-1$}}
\put(84.5,46){\Red{$-3$}}
\end{overpic}
\caption{A cosmetic bainding on $T(2,5)$}
\label{fig:sibling1}
\end{figure}

\begin{figure}[!htb]
\centering
\begin{overpic}[width=.65\textwidth]{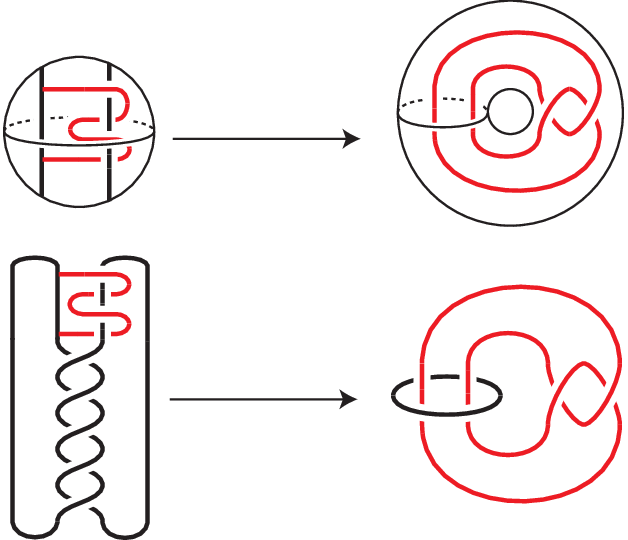}
\put(26,66.5){taking the double}
\put(26,59.5){branched cover}
\put(26,25){taking the double}
\put(26,18){branched cover}
\put(9,73){\Red{$-3$}}
\put(87,73){\Red{$-1$}}
\put(10,44){\Red{$-3$}}
\put(90,29){\Red{$-1$}}
\put(81,22){$5$}
\end{overpic}
\caption{The complement of the red colored knot in $L(5,1)$ 
is called the ``figure-eight sibling'' which is amphicheiral.}
\label{fig:sibling2}
\end{figure}

On the other hand, we can check that this chirally cosmetic banding is mundane as follows. 
This banding is described by the bandings with the slopes $0/1$ or $1/0$ as in Figure~\ref{fig:mundane5_1-1}. 
Therefore it suffices to show that the mirror image of the left side of Figure~\ref{fig:mundane5_1-1} with the slope $1/0$ is isotopic to the right side of Figure~\ref{fig:mundane5_1-1} with the same slope $1/0$. 
Such an isotopy is described in Figure~\ref{fig:mundane5_1-2}. 

\begin{figure}[!htb]
\centering
\begin{overpic}[width=.75\textwidth]{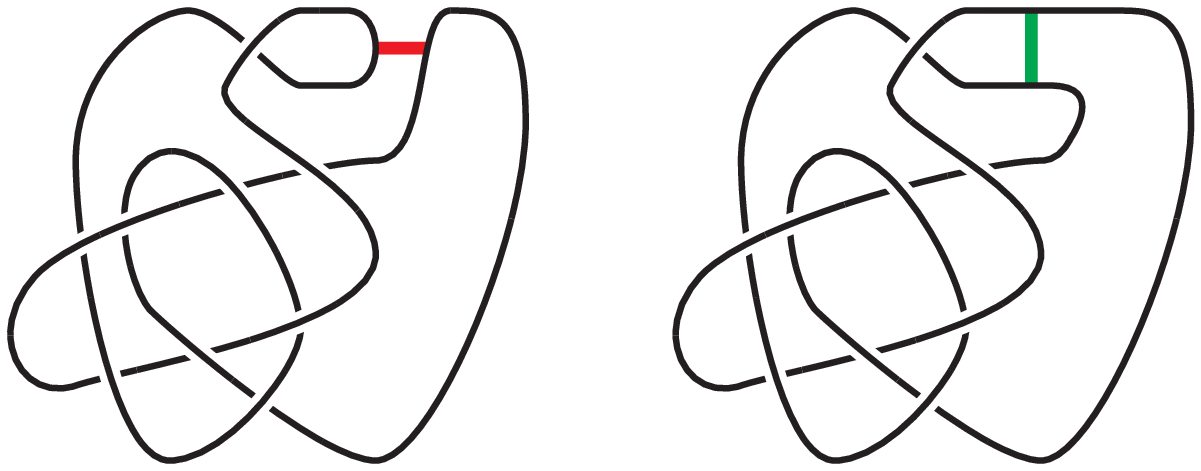}
\put(49,19){$=$}
\put(32.5,38){$\Red{\frac01}$}
\put(87,34.2){$\Green{\frac10}$}
\end{overpic}
\caption{The slope of the red banding is $0/1$ and that of the green banding is $1/0$.}
\label{fig:mundane5_1-1}
\end{figure}

\begin{figure}[!htb]
\centering
\begin{overpic}[width=\textwidth]{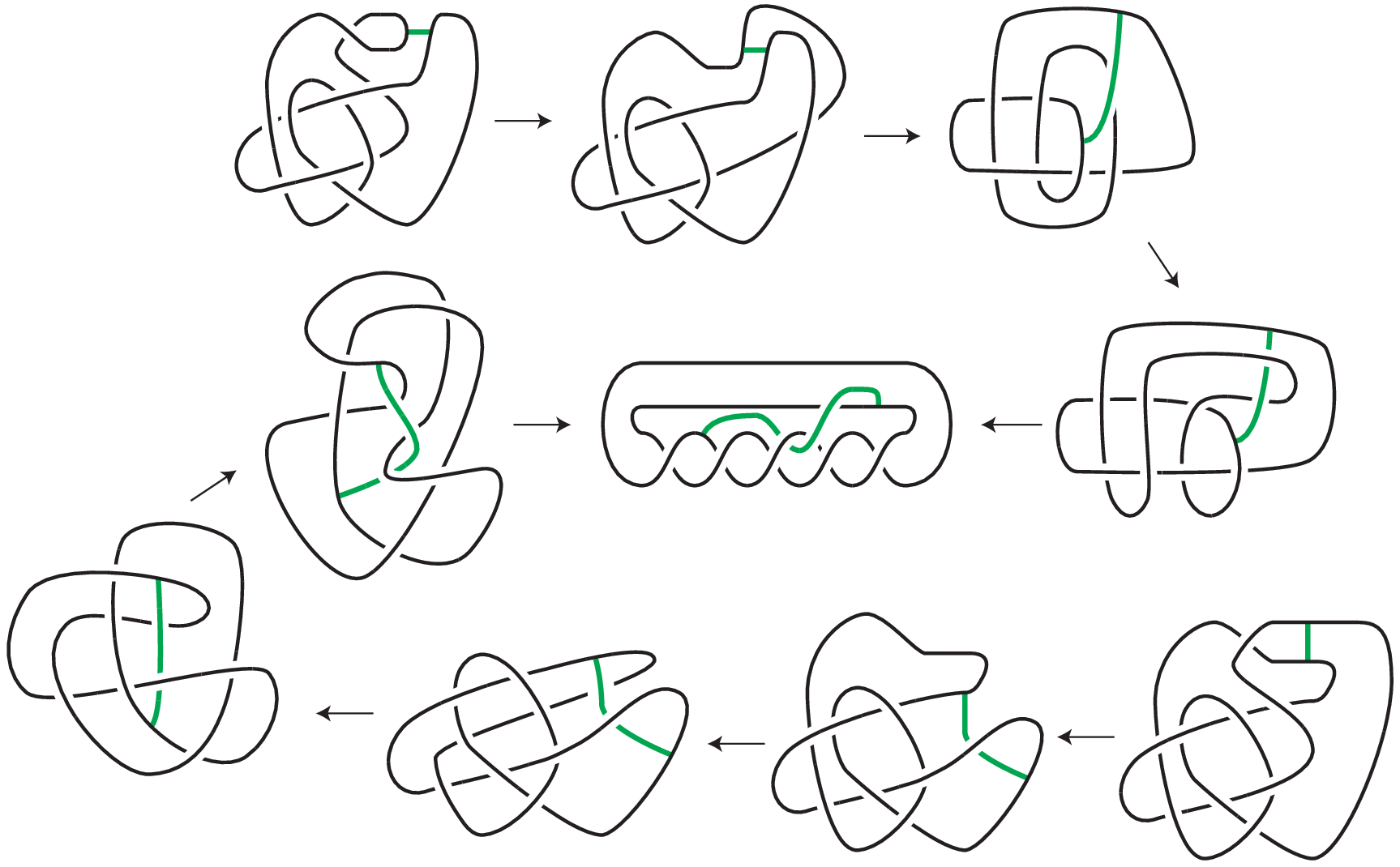}
\end{overpic}
\caption{The slopes of the green bands are $1/0$.}
\label{fig:mundane5_1-2}
\end{figure}

\begin{remark}
As pointed out in~\cite{MartelliPetronio}, 
amphicheiral hyperbolic manifolds are quite sporadic, 
at least among those with small volume. 
Actually, the only amphicheiral hyperbolic manifolds with boundary 
obtained as fillings on the ``magic manifold'' are 
the figure-eight knot complement, its sibling, 
and the complement of the two-bridge link $S(10,3)$. 
The figure-eight sibling is the only example related to a cosmetic banding among them. 
\end{remark}

\begin{remark}
The two slopes of the cosmetic fillings on the figure-eight sibling are $1/0$ and $-1/1$. 
Hence the distance of the slopes is one. 
In other word, 
there exists a homeomorphism $h$ of the figure-eight sibling to its mirror image 
such that the distance between a slope $r$ and the slope $h(r)$ is just one. 
On the other hand, for an amphicheiral knot $K$ in $S^3$, 
the slope $p/q$ changed to $-p/q$ by the orientation reversing self-homeomorphism 
of the exterior of $K$. 
If $|p|, |q| \ne 0$, then the distance between the slopes $p/q$ and $-p/q$ is 
$2 |pq|$ which is grater than one. 
It follows that no more examples can be created from amphicheiral knots in $S^3$. 
In view of this, we ask the following. 
\end{remark}

\begin{question}
By generalizing this banding, that is, the cosmetic banding on $T(2,5)$, 
can we find other non-trivial cosmetic Dehn fillings 
on an amphicheiral cusped hyperbolic manifold along distance one slopes? 
\end{question}

\section{Cosmetic banding on the knot $9_{27}$ and its generalization}
\label{sec:9_27}

In this section, we first focus on the knot $9_{27}$ to show that it admits a cosmetic banding, and reveal its mechanism. 
As a matter of fact, the existence of this cosmetic banding comes from the famous example related to the Cosmetic Surgery Conjecture, 
which was given in \cite{BleilerHodgsonWeeks}. 
The mechanism for that example which we find enables us to construct infinitely many examples of knots and links admitting chirally cosmetic bandings. 
As a byproduct, we present a hyperbolic knot which admits exotic chirally cosmetic Dehn surgeries yielding hyperbolic 3-manifolds. 
This gives a counterexample to the conjecture raised by Bleiler, Hodgson and Weeks in \cite[Conjecture 2]{BleilerHodgsonWeeks}. 

\subsection{The knot $9_{27}$}

In \cite{BleilerHodgsonWeeks}, Bleiler, Hodgson and Weeks gave a hyperbolic knot in $S^2 \times S^1$ admitting a pair of exotic chirally cosmetic surgeries which yields the lens space $L(49,-19)$ and its mirror image. 
The knot is shown in Figure~\ref{fig:BHW}. 
We remark that the surgery slopes for the example are $19$ and $18$ with respect to a meridian-longitude system, and so, their distance is just one.

\begin{figure}[!htb]
\centering
\begin{overpic}[width=.35\textwidth]{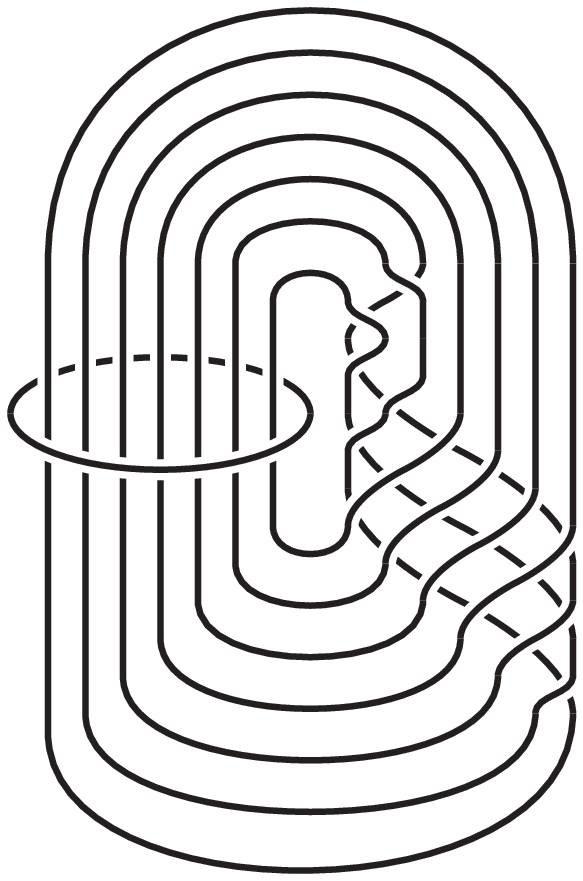}
\put(-4,52){$\dfrac01$}
\end{overpic}
\caption{A knot due to Bleiler, Hodgson and Weeks}
\label{fig:BHW}
\end{figure}

By considering the dual knot in $L(49,-19)$ for the knot in $S^2 \times S^1$, one can find a hyperbolic knot in $L(49,-19)$ which admits a non-trivial integral Dehn surgery yielding the mirror image of $L(49,-19)$, that is, $L(49,-18)$. 
Furthermore the knot in $L(49,-19)$ is strongly invertible, for the knot is obtained from the so-called Berge-Gabai knots in solid tori, and  all such knots have tunnel number one, that means the corresponding knots in $S^1 \times S^2$ are strongly invertible. 
See~\cite{BakerBuckLecuona} for detailed arguments for example. 

From this knot, by using the Montesinos trick, we have a chirally cosmetic banding on the knot $9_{27} = S(49,-19)$ as illustrated in Figure~\ref{fig:9_27}. 

\begin{figure}[!htb]
\centering
\begin{overpic}[width=.9\textwidth]{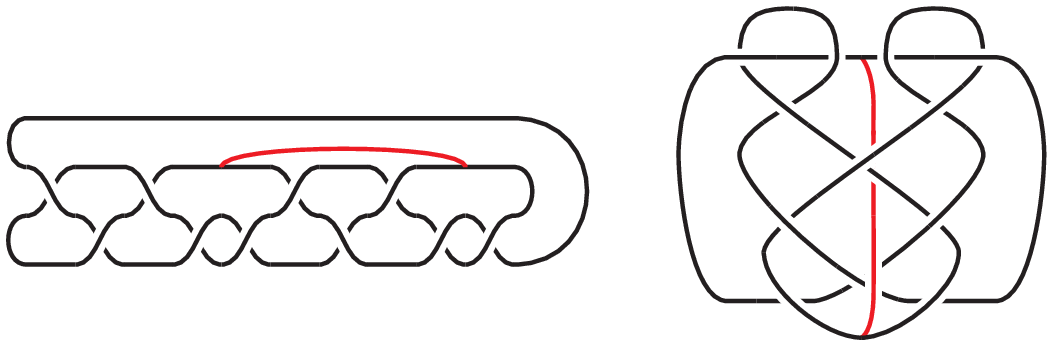}
\put(22,18.8){$\Red{1}$}
\put(80.6,22){$\Red{1}$}
\end{overpic}
\caption{The bandings along the red band with the slope $1$ are chirally cosmetic.}
\label{fig:9_27}
\end{figure}

Here we explain why such a cosmetic banding can occur for the knot $9_{27}$. 
The mechanism can be demonstrated in Figure \ref{fig-SymmetricPosition}. 
We first isotope the knot $9_{27}$ to the left position depicted in Figure \ref{fig-SymmetricPosition}, like a triangular prism. 
To the knot in the left, if we perform the banding along the red band in Figure~\ref{fig-SymmetricPosition} with the slope $0$, then we have the knot in the right. 
On the other hand, the knot in the right is actually the mirror image of the left. 
To see this, we perform mirroring the left knot along the middle horizontal plane shown in the figure, and then perform $2\pi/3$-rotation to obtain the right knot. 
Consequently, as shown in the figure, we can recognize that the knot $9_{27}$ can admit a chirally cosmetic banding. 

\begin{figure}[!htb]
\centering
\begin{overpic}[width=\textwidth]{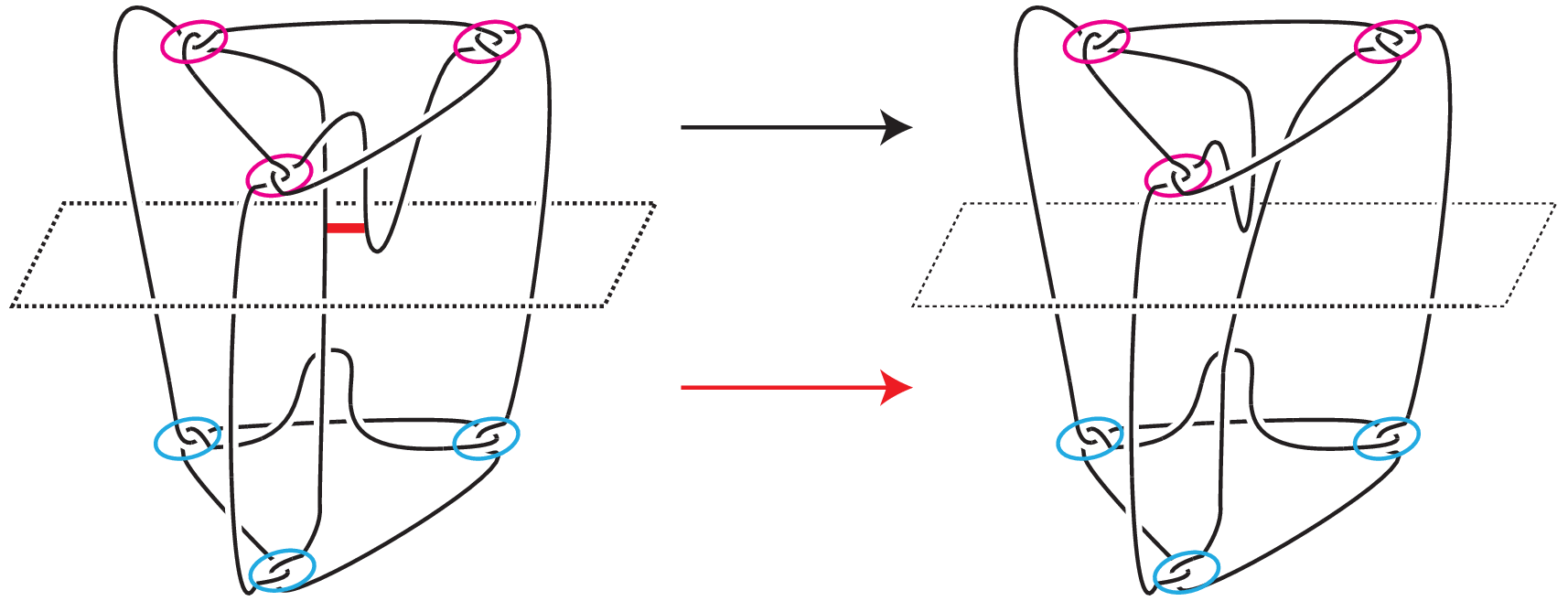}
\put(43.5,31.5){mirroring}
\put(43.5,27){and $\frac{2}{3}\pi$-rot.}
\put(43.5,15){banding}
\put(43.5,11){with slope $0$}
\end{overpic}
\caption{The mechanism of the chirally cosmetic banding.}\label{fig-SymmetricPosition}
\end{figure}

\subsection{Generalization}

Observing Figure \ref{fig-SymmetricPosition}, one can easily generalize it and have infinitely many examples of cosmetic bandings. 
For example, if one adds any number of twists to the single full-twists, or replaces the single full-twists by any rational tangles, in the knot shown in Figure \ref{fig-SymmetricPosition}, then one can obtain infinitely many examples of links admitting cosmetic bandings. 
By conversely using the Montesinos trick, we have infinitely many possibly new examples of knots admitting cosmetic surgeries. 

From now on, we focus on the knot $K$ which admits a cosmetic banding naturally extended from that on $9_{27}$ as illustrated in Figure~\ref{fig:pentagonal}. 
In some sense, it is obtained by the ``pentagonal version'' of the knot $9_{27}$. 
This knot $K$ admits a chirally cosmetic banding in the same way as seen in Figure~\ref{fig-SymmetricPosition}.

\begin{figure}[htb]
\includegraphics*[width=.4\textwidth]{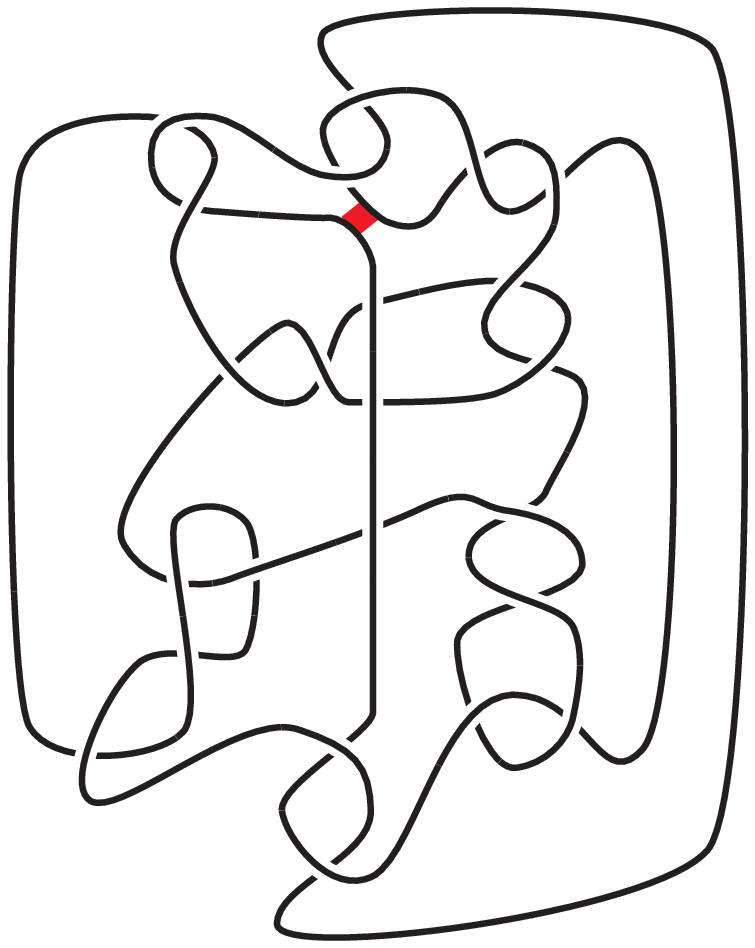}
\caption{The banding along the red band with the slope 0 is chirally cosmetic.}
\label{fig:pentagonal}
\end{figure}

Now let us consider the double branched cover $\bar{M}$ of $S^3$ branched along $K$. 
Applying the isotopic deformation as shown in Figure~\ref{fig:MontesinosTrick}, 
we can use the Montesinos trick conversely. 
Precisely, we can obtain a surgery description of $\bar{M}$ 
as in Figure~\ref{fig:SurgeryDescription} 
from the last picture of Figure~\ref{fig:MontesinosTrick}. 
Here we perform the meridional surgery on the red curve to obtain $\bar{M}$ in Figure~\ref{fig:SurgeryDescription} (i.e., we should ignore the red curve). 
The red curve actually appears as the preimage of the red band 
in Figure~\ref{fig:pentagonal}. 
Let $\bar{K}$ denote the red knot in $\bar{M}$ as shown in Figure \ref{fig:SurgeryDescription}. 

\begin{figure}[!htb]
\centering
\begin{overpic}[width=\textwidth]{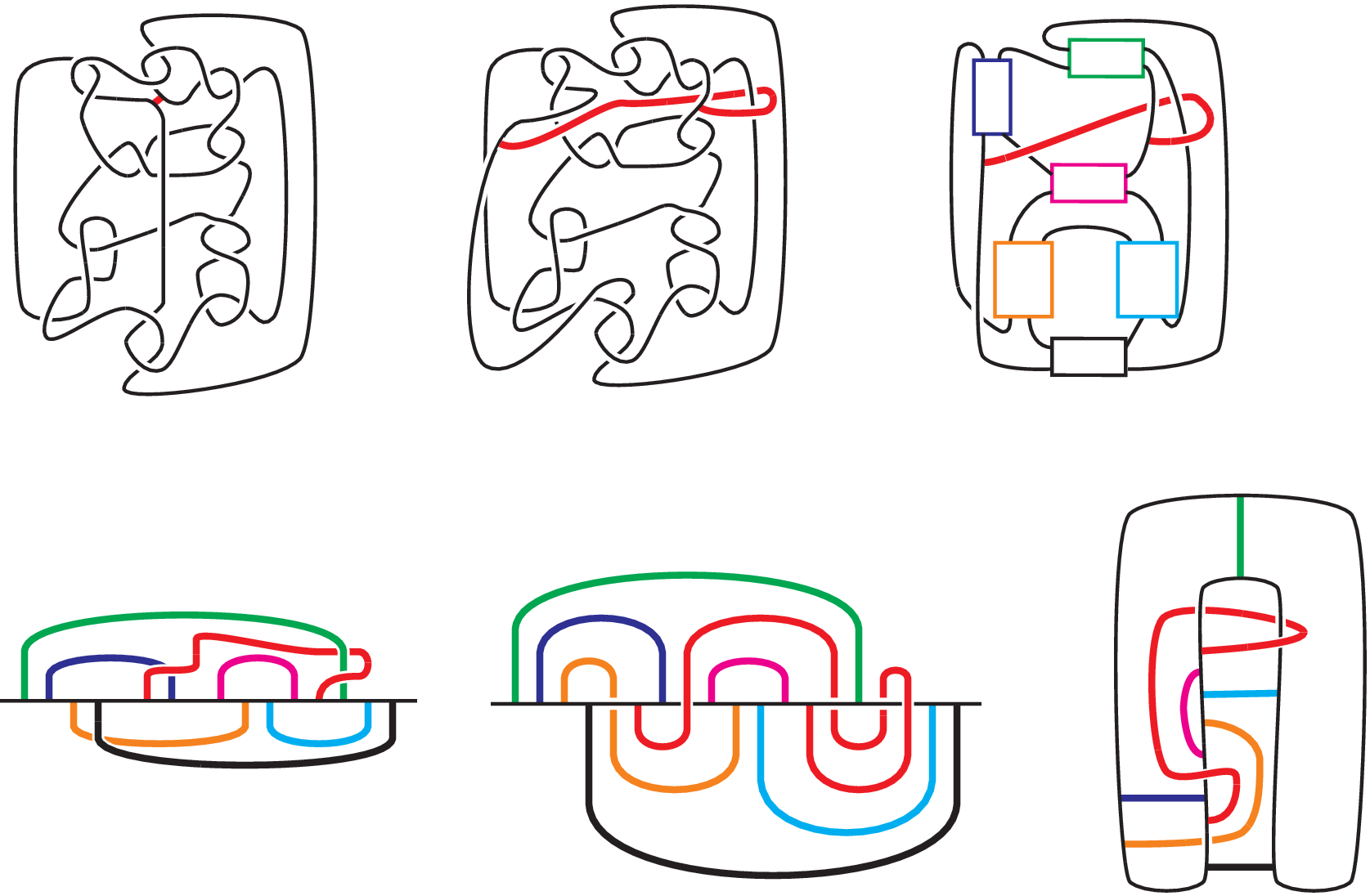}
\put(27,50){$=$}
\put(63,50){$=$}
\put(85,35){$\rotatebox{-60}{=}$}
\put(73,15){$\longleftarrow$}
\put(31,15){$=$}
\put(98.8,10){$\bullet$}
\put(100.5,10){$\infty$}
\put(73,17){cut}
\put(73,13){at $\infty$}
\put(9.7,55.7){\Red{$0$}}
\put(37,52){\Red{$1$}}
\put(77,56.2){\Red{$1$}}
\put(78.7,60.23){\Green{$3/2$}}
\put(71.8,57.3){\Blue{$3$}}
\put(77.4,51.2){\Magenta{$3/2$}}
\put(72.7,44){\Orange{$-\frac{3}{2}$}}
\put(81.8,44){\Cyan{$-\frac{3}{2}$}}
\put(77.6,38.5){$-3$}
\put(96,19){\Red{$1$}}
\put(91,25.5){$\Green{\frac32}$}
\put(80,7){\Blue{3}}
\put(84.5,15){\Magenta{$\frac32$}}
\put(78,3){\Orange{$-\frac{3}{2}$}}
\put(94,14){\Cyan{$-\frac{3}{2}$}}
\put(88,-0.3){$-2$}
\put(66,17){\Red{$1$}}
\put(50,25){$\Green{\frac32}$}
\put(46,17){\Blue{3}}
\put(57.5,16.2){\Magenta{$\frac32$}}
\put(48.,5){\Orange{$-\frac{3}{2}$}}
\put(53.5,3){\Cyan{$-\frac{3}{2}$}}
\put(66,1){$-2$}
\put(27.5,17){\Red{$3$}}
\put(16,22){$\Green{\frac32}$}
\put(7,15){\Blue{3}}
\put(16.8,14.8){\Magenta{$3/2$}}
\put(1,10){\Orange{$-\frac{1}{2}$}}
\put(20.1,12){\Cyan{$-3/2$}}
\put(12,7){$-2$}
\end{overpic}
\caption{The thick lines represent rational tangles.}
\label{fig:MontesinosTrick}
\end{figure}

\begin{figure}[htb]
\centering
\begin{overpic}[width=.6\textwidth]{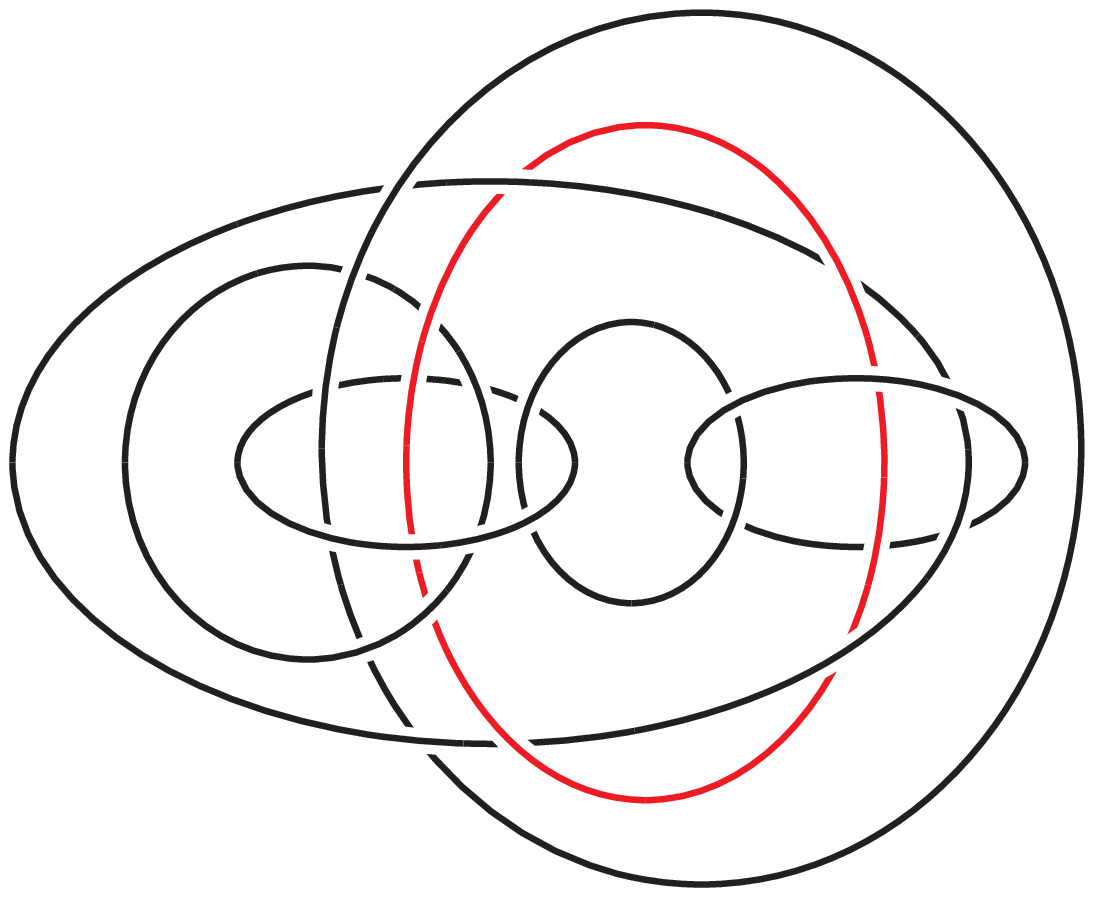}
\put(8,38.5){$3$}
\put(90,70){$-2$}
\put(90,45){$-\frac32$}
\put(10,60){$\frac32$}
\put(50,54){$\frac32$}
\put(14.5,38.5){$-\frac12$}
\put(69,69){$\Red{\bar{K}}$}
\end{overpic}
\caption{A counterexample of a conjecture proposed in \cite{BleilerHodgsonWeeks}.}
\label{fig:SurgeryDescription}
\end{figure}

By considering the knot $\bar{K}$ in $\bar{M}$, we can obtain the following. 

\begin{theorem}\label{thm;NewEx}
There exists a hyperbolic knot which admits exotic chirally cosmetic Dehn surgeries yielding hyperbolic 3-manifolds. 
Precisely, the hyperbolic knot given by the surgery description illustrated in Figure \ref{fig:SurgeryDescription}, 
which admits a pair of Dehn surgeries, one of which yields the mirror image of the 3-manifold obtained by the other, along inequivalent slopes. 
\end{theorem}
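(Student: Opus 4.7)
My plan is to verify, via the Montesinos trick together with computer-assisted hyperbolic geometry, that the red knot $\bar K\subset\bar M$ of Figure~\ref{fig:SurgeryDescription} is the asserted example. Since the banding of Figure~\ref{fig:pentagonal} is chirally cosmetic on $K$, the Montesinos trick produces, with no further work, two slopes on $\partial E(\bar K)$ of mutual distance one whose corresponding Dehn fillings on $\bar K$ are $\bar M$ and $-\bar M$ respectively: the meridian of $\bar K$ refills the solid torus that had been removed and so gives back $\bar M$, while the slope corresponding to the banding slope~$0$ gives the double branched cover of the mirror image of $K$, namely $-\bar M$. Thus a pair of Dehn surgery slopes whose fillings are chirally cosmetic is automatic once $\bar K$ has been identified, and what remains is to establish the three geometric conditions ``$E(\bar K)$ hyperbolic, both fillings hyperbolic, slopes inequivalent.''

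The first two conditions I would settle computationally. Feeding the surgery presentation of Figure~\ref{fig:SurgeryDescription} (with and without the Dehn filling on the red component) into SnapPy, simplifying the triangulation, and invoking the standard verified-computation routines produces rigorously certified complete hyperbolic structures of finite volume on each of $E(\bar K)$ and the two distinguished fillings.

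The main obstacle is the third condition, the inequivalence of the two surgery slopes. The difficulty is that the two fillings are oriented mirror images of each other and hence homeomorphic as unoriented manifolds, so the slopes cannot be told apart by inspection of the filled manifolds alone. A priori an equivalence of slopes could be witnessed by either an orientation-preserving or an orientation-reversing self-homeomorphism of $E(\bar K)$. The former is ruled out once one checks that $\bar M$ itself is chiral, which is straightforward from its surgery presentation, since no orientation-preserving self-homeomorphism can swap fillings that are homeomorphic only via an orientation reversal. The latter I would rule out by computing the finite symmetry group of $E(\bar K)$ in SnapPy, extracting its action on $H_1(\partial E(\bar K);\Z)$, and verifying by inspection that no element of the group sends the meridional slope to the banding slope. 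Together with the hyperbolicity verifications and the Montesinos-trick identification of the fillings, this yields the desired exotic chirally cosmetic pair and gives the counterexample to \cite[Conjecture 2]{BleilerHodgsonWeeks}.
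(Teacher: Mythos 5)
Your proposal follows the same overall strategy as the paper: identify $\bar K\subset\bar M$ via the Montesinos trick, observe that the chirally cosmetic banding of Figure~\ref{fig:pentagonal} hands you the two distance-one slopes for free, certify hyperbolicity of $\bar M$ and of $\bar M\setminus\bar K$ by verified computation (the paper uses \emph{hikmot} \cite{hikmot}), and then rule out equivalence of the slopes. The one genuine divergence is in that last step. The paper invokes \cite[Lemma 2]{BleilerHodgsonWeeks} to reduce slope inequivalence to a single condition --- chirality of the cusped manifold $\bar M\setminus\bar K$ --- which is then certified by the verified canonical-triangulation method of \cite{DunfieldHoffmanLicata}. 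You instead split into two cases: an orientation-preserving slope equivalence is excluded by chirality of the closed manifold $\bar M$, and an orientation-reversing one by computing the full symmetry group of $E(\bar K)$ and its action on the cusp. Your case analysis is logically complete (and arguably spells out the orientation-preserving case more explicitly than the paper does), and the symmetry-group computation would in fact handle both cases at once. The one soft spot is your remark that chirality of $\bar M$ is ``straightforward from its surgery presentation'': rigorously certifying chirality (of either the closed filling or the knot exterior) is precisely the delicate point, and it requires the same verified canonical-decomposition machinery of \cite{DunfieldHoffmanLicata} that the paper leans on; SnapPy's unverified isometry/symmetry routines alone do not suffice for a proof. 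With that caveat, both routes buy the same theorem; the paper's is leaner because the single chirality check on $E(\bar K)$ subsumes everything, while yours makes the underlying dichotomy of possible slope equivalences more transparent.
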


This gives a counterexample to the following conjecture. 
(We changed terminology slightly to fit ours.) 

\begin{conjecture}[{\cite[Conjecture 2]{BleilerHodgsonWeeks}}]
Any hyperbolic knots admit no cosmetic surgeries, purely or chirally, yielding hyperbolic manifolds.
\end{conjecture}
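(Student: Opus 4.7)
The plan is first to verify the chirally cosmetic banding on $K$ using the pentagonal-prism symmetry argument analogous to that of Figure~\ref{fig-SymmetricPosition}, then to transport this to a chirally cosmetic Dehn surgery on $\bar{K} \subset \bar{M}$ via the Montesinos trick, and finally to verify hyperbolicity and the inequivalence of slopes.

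First I would confirm that the knot $K$ in Figure~\ref{fig:pentagonal} admits a chirally cosmetic banding of slope $0$. After isotoping $K$ into the pentagonal prism position, the $0$-banding along the red band produces a knot obtained from $K$ by reflection across the horizontal mirror plane followed by a $2\pi/5$-rotation, and hence is the mirror image $-K$. This is the direct pentagonal analog of the triangular argument used for $9_{27}$.

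Next I would apply the Montesinos trick. Since $K$ is strongly invertible with the banding arc contained in the fixed point set of the involution, the preimage $\bar{K}$ of the band in the double branched cover $\bar{M}$ is a knot, and the $0$-banding on $K$ lifts to an integral Dehn surgery on $\bar{K}$. Carrying out the sequence of isotopies and tangle-to-surgery conversions already drawn in Figure~\ref{fig:MontesinosTrick} then yields the surgery description of the pair $(\bar{M}, \bar{K})$ shown in Figure~\ref{fig:SurgeryDescription}. By construction the meridional filling of $\bar{K}$ recovers $\bar{M}$ itself, while the integral filling corresponding to the $0$-banding yields the double branched cover along the banded link $-K$, which is $-\bar{M}$.

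To verify hyperbolicity of both the exterior $E(\bar{K}) = \bar{M} \setminus \mathrm{int}\, N(\bar{K})$ and of the two filled manifolds $\bar{M}$ and $-\bar{M}$, I would feed the surgery description of Figure~\ref{fig:SurgeryDescription} into SnapPy, compute an explicit hyperbolic structure on the corresponding link complement in $S^3$, and certify hyperbolicity rigorously (for instance via a verified SnapPy/HIKMOT calculation). To see that the two surgery slopes are inequivalent, I would note that they have intersection number $1$ on $\partial N(\bar{K})$, compute the isometry group of $E(\bar{K})$ with SnapPy, and check that no element of this finite group carries the meridional slope to the integral banding slope; this gives the claimed pair of exotic chirally cosmetic surgeries.

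The main obstacle is the rigorous computer-assisted verification: one must certify hyperbolicity of $E(\bar{K})$ and of both fillings, and must check carefully (with correct orientation conventions) both that the two filled manifolds are orientation-reversingly but not orientation-preservingly homeomorphic and that the symmetry group of $E(\bar{K})$ does not identify the two slopes. Everything else in the argument is a formal consequence of the Montesinos-trick setup combined with the pentagonal symmetry of $K$.
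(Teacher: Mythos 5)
Your proposal is correct and is essentially the paper's own argument: the pentagonal analogue of the $9_{27}$ mechanism (Figure~\ref{fig-SymmetricPosition}) gives the chirally cosmetic $0$-banding on the knot $K$ of Figure~\ref{fig:pentagonal}; the Montesinos trick, via the moves of Figure~\ref{fig:MontesinosTrick}, converts it into the pair consisting of the trivial $1/0$-surgery and the $3$-surgery on $\bar{K} \subset \bar{M}$ of Figure~\ref{fig:SurgeryDescription}, yielding $\bar{M}$ and its mirror image; and hyperbolicity of both $\bar{M}$ and $\bar{M} \setminus \bar{K}$ is rigorously certified with \emph{hikmot}, with \textit{SnapPy} confirming the orientation-reversing homeomorphism between the fillings. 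The one step where you take a different route is the inequivalence of the two slopes: the paper appeals to \cite[Lemma 2]{BleilerHodgsonWeeks}, which reduces the question to showing that $\bar{M} \setminus \bar{K}$ is chiral, and then certifies chirality by the verified canonical-triangulation method of \cite{DunfieldHoffmanLicata}; you instead propose computing the symmetry group of the exterior and checking that no element carries the slope $1/0$ to the slope $3$. Your variant is legitimate --- since the exterior is certified hyperbolic, Mostow rigidity reduces all self-homeomorphisms of it to the finite isometry group, and the same certified canonical cell decomposition underlying \cite{DunfieldHoffmanLicata} makes that computation rigorous --- but it demands strictly more information (the full action on peripheral slopes) than the paper's reduction, which needs only the absence of orientation-reversing symmetries. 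One caution: your remark that the two slopes have intersection number one carries no logical weight for inequivalence, since distance-one slopes can perfectly well be equivalent; in your version the certified symmetry-group check is doing all of the work, exactly as the chirality certification does in the paper's.
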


\begin{proof}[Proof of Theorem \ref{thm;NewEx}]
We consider a knot $\bar{K}$ in a 3-manifold $\bar{M}$ constructed above. 
Precisely it is described by the surgery description illustrated in Figure \ref{fig:SurgeryDescription}, where the red curve indicates the knot $\bar{K}$. 

The knot $\bar{K}$ admits a pair of cosmetic Dehn surgeries, one of which is the trivial surgery ($1/0$-surgery) yielding $\bar{M}$, 
and the other is the $3$-surgery yielding the mirror image of $\bar{M}$. 
By using the Montesinos trick, the corresponding knot $K$ in $S^3$ is shown in Figure \ref{fig:pentagonal}, 
and the knot $K$ admits a cosmetic banding in the same way as for the knot $9_{27}$. 
Also we can use \textit{SnapPy}, which is developed for studying the geometry and topology of 3-manifolds, to check that they are mutually homeomorphic orientation-reversingly. 

By using the computer program \emph{hikmot} developed in \cite{hikmot}, 
one can certify the manifold $\bar{M}$ is hyperbolic. 
The program utilizes interval arithmetics to solve the gluing equations, and can rigorously give a certification that a given manifold is hyperbolic. 

We next consider the complement $\bar{M} \setminus \bar{K}$ of the knot $\bar{K}$ in $\bar{M}$. 
Again one can certify the manifold $\bar{M} \setminus \bar{K}$ is hyperbolic by using the computer program \emph{hikmot}. 

To show that the slopes corresponding to $1/0$ and $3$ are inequivalent, due to \cite[Lemma 2]{BleilerHodgsonWeeks}, 
it suffices to show that the manifold $\bar{M} \setminus \bar{K}$ is chiral, 
that is, it is not orientation-preservingly homeomorphic to its mirror image. 
Actually this can be certified by using a computer-aided method developed in  \cite{DunfieldHoffmanLicata} based on \emph{hikmot} and \textit{SnapPy}.  

All the computations we performed are explained in Appendix~\ref{appA} in detail. This part is due to Hidetoshi Masai. 
\end{proof}

\subsection*{Acknowledgements}
The authors would like to thank Hidetoshi Masai for his helps to the computer-aided proof of Theorem \ref{thm;NewEx}. 

\appendix
\section{Computer aided proof}\label{appA}
\centerline{\textsc{by Hidetoshi Masai}}

\vspace{0.3cm}

We give a computer aided proof of the following theorem, which we need to complete the proof of Theorem \ref{thm;NewEx}.
\begin{theorem}
Let $\bar K$ denote the knot described in Figure \ref{fig:SurgeryDescription}.
Then we have, 
\begin{enumerate}
\item the knot $\bar K$ is hyperbolic,
\item the pair of manifolds $\bar K(1,0)$ and $\bar K(3,1)$ obtained by $(1/0)$-surgery and $3$-surgery on $\bar K$ respectively are hyperbolic,
\item $\bar K(1,0)$ is isometric to $\bar K(3,1)$, and 
\item the knot $\bar K$ is chiral.
\end{enumerate}
\end{theorem}
\begin{proof}
We use computer programs \textit{hikmot}, \textit{SnapPy}, and \texttt{canonical.py} developed in 
\cite{hikmot}, \cite{SnapPy}, and \cite{DunfieldHoffmanLicata} respectively.
All the codes and data we need for the proof are available at \cite{IJM}, see also README file in \cite{IJM}.
(1) and (2) are proved by \textit{hikmot}.
To apply \textit{hikmot}, we need to prepare triangulations with \textit{SnapPy}'s solution type ``all tetrahedra positively oriented''.
We call such a triangulation {\em positive}.
For (1), we can easily get a positive triangulation, and verify hyperbolicity by \textit{hikmot}.
However, after Dehn surgeries, we often get {\em non}-positive triangulations.
\textit{SnapPy} has a function called ``randomize'' to change triangulations randomly, 
and chances are that new triangulations are positive.
Unfortunately, it is often the case that after Dehn surgeries, even randomized triangulations can hardly be positive.
To solve this situation, we can try drilling short geodesics out. 
Then the trivial Dehn surgery of drilled manifold gives a new surgery description and hence a new triangulation which may be positive.
This is Algorithm 2 of \cite{hikmot}, but for completeness we rewrite the algorithm here.
\begin{algorithm}                      
\caption{Find a positive triangulation by drilling out}
\label{alg.drillout}
\begin{algorithmic}
\REQUIRE $M$ is a closed manifold with a surgery description.
\ENSURE $M$ has a positive triangulation.
	\WHILE{We can find a short closed geodesic $\gamma\subset M$}
			\STATE Drill out $\gamma$ to get $M\setminus\gamma$,
			\STATE Take filled\_triangulation $N$ of $M\setminus\gamma$,
			\STATE Fill the cusp of $N$ by the slope $(1,0)$.
			\STATE (By the above procedure, we forget the original surgery description and get a new surgery description.)
			\IF{$N$ has a positive triangulation.}
				\STATE return [True, $N$]
				\ENDIF
	\ENDWHILE
\STATE return False.
\end{algorithmic}
\end{algorithm}
This algorithm is implemented as \texttt{getpositive\_drill.py}. 
Two triangulation files \texttt{positive\_K10.tri} and \texttt{positive\_K31.tri} are positive triangulations
of $\bar K(1,0)$ and $\bar K(3,1)$ obtained by \texttt{getpositive\_drill.py}.
With these triangulations, we can verify the hyperbolicity of $\bar K(1,0)$ and $\bar K(3,1)$ by \textit{hikmot}.

For (3), we used \textit{SnapPy}'s \texttt{is\_isometric\_to} function, see also Remark \ref{rmk.falsepositive} below.

We now explain how we get (4).
The program \textit{hikmot} gives not only the verification of hyperbolicity, but also data of hyperbolic structures with rigorous error bounds.
Using the data, we can prove inequalities rigorously.
One application obtained in this manner is \texttt{canonical.py} due to Dunfield-Hoffman-Licata, which can rigorously ensure a given triangulation to be canonical in the sense of Epstein-Penner \cite{EP}.
Once we get the canonical triangulations, chirality can be checked from the combinatorial data of the canonical triangulations.
This has already been implemented on \textit{SnapPy} as \texttt{is\_amphicheiral} function on the symmetry group class that can be obtained by \texttt{symmetry\_group} function on the manifold class.
\end{proof}

\begin{remark}\label{rmk.falsepositive}
We used \textit{hikmot}, \textit{SnapPy}'s \texttt{is\_isometric} function, and \texttt{canonical.py} due to Dunfield-Hoffman-Licata.
They all never give false positives.
However the answer ``False'' of \textit{hikmot} and \texttt{canonical.py} only mean the failure of the verifications, and do not prove anything.
\textit{SnapPy}'s \texttt{is\_isometric} is rigorous provided we use triangulations which are verified to be canonical by \texttt{canonical.py}.
\end{remark}

\end{document}